\documentclass[12pt,a4paper,reqno]{amsart}

\usepackage{amsmath,amssymb,amsthm,mathrsfs}
\usepackage[a4paper,left=2.5cm,right=2.5cm,top=2.4cm,bottom=2.4cm]{geometry}
\usepackage[unicode,colorlinks=false,pdfborder={0 0 0}]{hyperref}
\usepackage{microtype}

\theoremstyle{plain}
\newtheorem{theorem}{Theorem}[section]
\newtheorem{lemma}[theorem]{Lemma}
\newtheorem{proposition}[theorem]{Proposition}

\theoremstyle{remark}
\newtheorem{remark}[theorem]{Remark}
\numberwithin{equation}{section}

\makeatletter
\renewcommand{\subsection}{\@startsection{subsection}{2}{\z@}%
	{2.5ex plus .8ex minus .2ex}{1.2ex plus .2ex}{\normalfont\bfseries}}
\makeatother
\author{Changyu Ren}
\address{School of Mathematical Science\\
	Jilin University\\ Changchun\\ China}
\email{rency@jlu.edu.cn}
\author{Xianlu Sun}
\address{School of Mathematics and Statistics\\
	Northeast Normal University\\ Changchun\\ China}
\email{sunxl221@nenu.edu.cn}
\thanks{Research of the first author is supported by NSFC Grant No. 12571216, 11871243.}

\begin{document}
	
	\title[Rigidity of Convex Entire Solutions to Sum Hessian Equations]
	{Rigidity of Convex Entire Solutions to Sum Hessian Equations}
	\begingroup
	\setbox0=\hbox{\(0\)}
	\endgroup
	\begin{abstract}
		We study the rigidity of convex entire solutions to sum Hessian equations. Using a dimension-lifting identity, we reduce the problem to a standard \(k\)-Hessian equation in one higher dimension. By applying existing rigidity theorems, we prove that every \(C^2\) convex entire solution is a quadratic polynomial for \(2\leq k\leq n\), \(\alpha\geq0\), and a positive constant right-hand side, without any growth assumption at infinity. We also discuss the linear case \(k=1\).
	\end{abstract}
	
	\keywords{Sum Hessian equations, Convex entire solutions,
		Rigidity theorem, Dimension lifting}
	
	\maketitle
	
	\section{introduction}
	\label{sec:introduction}
	
	This paper studies the rigidity problem for convex entire solutions of the following sum Hessian equation in Euclidean space:
	\begin{equation}\label{eq:sum-hessian}
		S_k(D^2u)
		:=\sigma_k\bigl(\lambda(D^2u)\bigr)
		+\alpha\sigma_{k-1}\bigl(\lambda(D^2u)\bigr)
		=c_0,
		\quad x\in\mathbb R^n,
	\end{equation}
	where \(n\ge 2\), \(2\le k\le n\), \(\alpha\ge 0\), and \(c_0>0\) is a constant.
	
	To explain the notation in the equation, let \(\operatorname{Sym}(n)\) denote the space of \(n\times n\) real symmetric matrices. For any \(H\in\operatorname{Sym}(n)\), let
	\[
	\lambda(H)=\bigl(\lambda_1(H),\cdots,\lambda_n(H)\bigr)\in\mathbb R^n
	\]
	be its eigenvalue vector counted with multiplicity. In particular, for \(u\in C^2(\mathbb R^n)\), its Hessian matrix is \(D^2u=(u_{x_ix_j})_{1\le i,j\le n}\), and
	\[
	\lambda(D^2u)=
	\bigl(\lambda_1(D^2u),\cdots,\lambda_n(D^2u)\bigr)
	\]
	denotes the eigenvalue vector of \(D^2u\).
	
	For \(1\le k\le n\), we use the unnormalized \(k\)-th elementary symmetric polynomial
	\begin{equation}\label{eq:elementary}
		\sigma_k(H)=\sigma_k\bigl(\lambda(H)\bigr)
		=\sum_{1\le i_1<\cdots<i_k\le n}
		\lambda_{i_1}(H)\cdots\lambda_{i_k}(H),
		\quad 1\le k\le n,
	\end{equation}
	and set \(\sigma_0(H)=1\). When no confusion arises, we abbreviate \(\sigma_k\bigl(\lambda(H)\bigr)\) as \(\sigma_k(H)\). We write \(H\ge 0\) to mean that \(H\) is positive semidefinite; for a \(C^2\) function, convexity of \(u\) is equivalent to \(D^2u\ge 0\). By an entire solution, we mean a solution defined on all of \(\mathbb R^n\) that satisfies the corresponding equation.
	
	The central question studied in this paper is: must every convex \(C^2\) entire solution of equation~\eqref{eq:sum-hessian} be a quadratic polynomial?
	
	Hessian equations are an important class of fully nonlinear elliptic equations, and they play a significant role in convex geometry, geometric analysis, and the regularity theory of partial differential equations. Closely related to equation~\eqref{eq:sum-hessian} is the standard \(k\)-Hessian equation
	\begin{equation}\label{eq:standard-k-hessian}
		\sigma_k(D^2v)=c_0>0,
		\quad x\in\mathbb R^n.
	\end{equation}
	When \(k=1\), it is the Poisson equation with a constant right-hand side; when \(k=n\), it is the Monge--Amp\`ere equation.
	
	For \(1\le k\le N\), define the G\r{a}rding cone
	\[
	\Gamma_k^N
	=\bigl\{\lambda\in\mathbb R^N:
	\sigma_j(\lambda)>0,\quad 1\le j\le k\bigr\}.
	\]
	On this cone, the concavity of \(\sigma_k^{1/k}\) and the corresponding ellipticity are important structural properties of the standard \(k\)-Hessian equation; see~\cite{CNS1985}.
	
	Classical results first appeared at the two endpoints. For \(k=1\), the Liouville theorem for harmonic functions and convexity imply that every convex \(C^2\) entire solution of the Poisson equation with constant right-hand side must be a quadratic polynomial. For \(k=n\), the well-known J\"orgens--Calabi--Pogorelov theorem states that convex solutions of the Monge--Amp\`ere equation with positive constant right-hand side in Euclidean space have quadratic polynomial rigidity. J\"orgens~\cite{Jorgens1954} first solved the two-dimensional case; Calabi~\cite{Calabi1958} extended the result to dimensions three to five; Pogorelov~\cite{Pogorelov1972} later completed the proof in general dimension. Cheng--Yau~\cite{ChengYau1986} gave a geometric proof, and Caffarelli--Li~\cite{CaffarelliLi2003} extended the result to a weaker solution framework. For \(2\leq k<n\), the theory of elliptic cones and admissibility for Hessian equations established by Caffarelli--Nirenberg--Spruck~\cite{CNS1985} provided an important foundation.
	
	For \(2\leq k<n\), early rigidity results usually required growth conditions at infinity in addition to convexity. Bao--Chen--Guan--Ji~\cite{BaoChenGuanJi2003} proved that under quadratic lower growth, every strictly convex entire solution is a quadratic polynomial. Here quadratic lower growth means that there exist \(b>0\), \(B\in\mathbb R\), and \(R>0\) such that for all sufficiently large \(|x|\),
	\(v(x)\geq b|x|^2-B\).
	Li--Ren--Wang~\cite{LiRenWang2016} used Pogorelov-type interior second-order estimates to extend the corresponding rigidity results to \((k+1)\)-admissible entire solutions; Du~\cite{Du2022} further started from the geometric properties of sublevel sets and gave a criterion for quadratic polynomial rigidity in the locally strictly convex case. Wang--Bao~\cite{WangBao2022} studied \(k\)-admissible entire solutions with two-sided quadratic growth, and Chu--Dinew~\cite{ChuDinew2023} studied admissible solutions satisfying additional lower bounds for higher-order symmetric functions under quadratic lower growth. Later, Tu~\cite{Tu2024} established the corresponding rigidity results for admissible entire solutions of the \((n-1)\)-Hessian equation; Zhang~\cite{Zhang2025} used concavity inequalities and interior second-order estimates to prove \(k\)-Hessian rigidity under a semiconvex condition and quadratic lower growth. Recently, Wang~\cite{BinWang2026} established rigidity results for smooth \(k\)-admissible entire solutions with quadratic lower growth in the range \(2k>n\). These works gradually weakened the convexity or admissibility conditions on the solutions, but in general they still retained some growth assumption at infinity.
	
	For \(k=2\), stronger results without growth assumptions have already been obtained. Warren--Yuan~\cite{WarrenYuan2009} used Hessian estimates for the three-dimensional \(\sigma_2\) equation to obtain rigidity of entire solutions with quadratic growth. Chang--Yuan~\cite{ChangYuan2010} proved that under Hessian lower bounds close to convexity, every entire solution of \(\sigma_2(D^2v)=1\) must be a quadratic polynomial, with no growth assumption at infinity. Chen--Xiang~\cite{ChenXiang2019} later obtained rigidity results with growth conditions in three dimensions and in higher dimensions under corresponding \(\sigma_3\) lower bounds. After that, Shankar--Yuan~\cite{ShankarYuan2022} further proved that every smooth semiconvex entire solution of \(\sigma_2(D^2v)=1\) must be a quadratic polynomial, thereby removing the earlier near-convexity restriction. In terms of regularity, Shankar--Yuan~\cite{ShankarYuan2025} later established interior Hessian estimates for the four-dimensional \(\sigma_2\) equation and discussed additional structural conditions in higher dimensions; Li--Wu~\cite{LiWu2026} further established interior estimates for the quadratic Hessian equation in arbitrary dimension, and their results also imply Liouville-type rigidity under quadratic growth. For related geometric equations, Yuan~\cite{Yuan2002} studied Bernstein-type rigidity for the special Lagrangian equation; Lu--Sroka ~\cite{LuSroka2026} used a transformation to the \(\sigma_2\) equation to prove Liouville-type rigidity of semiconvex entire solutions of the Hessian quotient equation 
	\[
	\frac{\sigma_2(D^2v)}{\sigma_1(D^2v)}=1.
	\]

	On the other hand, the above results cannot be extended indiscriminately to all nonconvex \(k\)-admissible entire solutions. Warren~\cite{Warren2016} constructed non-polynomial entire \(k\)-admissible solutions, showing that admissibility alone is not enough to guarantee general Bernstein-type rigidity. In contrast, Zhou--Zhu~\cite{ZhouZhu2026} recently proved that when \(N\geq4\) and \(3\leq k\leq N-1\), every convex \(C^2\) entire solution of \(\sigma_k(D^2v)=c_0>0\) in \(\mathbb R^N\) is a quadratic polynomial, with no additional growth assumption at infinity. Liu--Ren~\cite{LiuRen2023} directly studied the sum-Hessian equation 
	\[
	\sigma_k(D^2u)+\alpha\sigma_{k-1}(D^2u)=1,
	\]
	and used Pogorelov-type estimates to establish rigidity of entire solutions under \(k\)-admissibility and quadratic growth assumptions.
	
	We obtain the following rigidity result for convex entire solutions.
	
	\begin{theorem}\label{thm:main}
		Let \(n\geq2\), \(2\leq k\leq n\), \(\alpha\geq0\), and let \(c_0>0\) be a constant. If \(u\in C^2(\mathbb R^n)\) is convex and satisfies \eqref{eq:sum-hessian}, then \(u(x)\) is a quadratic polynomial, i.e.
		\begin{equation}\label{eq:quadratic-solution}
			u(x)=\frac12 x^TQx+b\cdot x+d,
		\end{equation}
		where \(Q\in\operatorname{Sym}(n)\) is a constant positive semidefinite matrix, \(b\in\mathbb R^n\), \(d\in\mathbb R\).
	\end{theorem}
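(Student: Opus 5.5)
We lift the problem to one higher dimension. Set \(N=n+1\) and define
\[
v(x,t)=u(x)+\frac{\beta}{2}t^2,\qquad (x,t)\in\mathbb R^{n}\times\mathbb R,
\]
with a constant \(\beta>0\) to be fixed. Then \(D^2v=\operatorname{diag}(D^2u,\beta)\), so the eigenvalues of \(D^2v\) are those of \(D^2u\) together with \(\beta\). The standard identity \(\sigma_k(\lambda,\beta)=\sigma_k(\lambda)+\beta\sigma_{k-1}(\lambda)\) gives
\[
\sigma_k(D^2v)=\sigma_k(D^2u)+\beta\sigma_{k-1}(D^2u).
\]
If \(\alpha>0\), take \(\beta=\alpha\). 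Then \(v\) is a convex \(C^2\) entire solution of \(\sigma_k(D^2v)=c_0\) in \(\mathbb R^{N}\), and \(2\le k\le n=N-1\).

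\textbf{Case \(k\ge3\).} Here \(N=n+1\ge k+1\ge4\) and \(3\le k\le N-1\). The theorem of Zhou--Zhu~\cite{ZhouZhu2026} therefore applies and shows that \(v\) is a quadratic polynomial. Restricting to \(t=0\), \(u(x)=v(x,0)\) is a quadratic polynomial.

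\textbf{Case \(k=2\).} Here \(v\) is convex, hence semiconvex, and satisfies \(\sigma_2(D^2v)=c_0\). Rescale by \(\tilde v=v/\sqrt{c_0}\), so that \(\sigma_2(D^2\tilde v)=1\), and apply Shankar--Yuan~\cite{ShankarYuan2022}. That result is stated for smooth solutions, so we first need smoothness. A convex \(C^2\) solution of the \(\sigma_2\) equation is uniformly elliptic on compact sets, since \(D^2v\) is locally bounded and \(D^2v\in\Gamma_2\), and the operator \(\sigma_2^{1/2}\) is concave. Evans--Krylov then gives \(C^{2,\gamma}\), and Schauder bootstrapping gives \(C^\infty\). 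Alternatively, in the convex case one can invoke Chang--Yuan~\cite{ChangYuan2010}, whose Hessian lower bound is trivially satisfied by convex solutions.

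\textbf{Case \(\alpha=0\).} The equation is \(\sigma_k(D^2u)=c_0\) in \(\mathbb R^n\). Lift anyway with an arbitrary \(\beta>0\): \(v\) then solves \(\sigma_k(D^2v)=c_0+\beta\sigma_{k-1}(D^2u)\), which is not constant, so this lift does not work. Instead use the \(\beta=0\) identity directly. For \(3\le k\le n-1\), apply Zhou--Zhu with \(N=n\). For \(k=2\), use Shankar--Yuan or Chang--Yuan. For \(k=n\), use Jörgens--Calabi--Pogorelov.

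This leaves \(k=n\) with \(\alpha>0\). After lifting, \(k=N-1\), so it is already covered by Zhou--Zhu when \(N\ge4\), and by the \(\sigma_2\) results when \(N=3\), \(k=2\).

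\textbf{Form of the answer.} Since \(D^2u\ge0\) everywhere, the constant Hessian \(Q\) is positive semidefinite, which gives \eqref{eq:quadratic-solution}.

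\textbf{Main obstacle.} The main risk is the exact hypotheses of the cited theorems: whether the Zhou--Zhu theorem demands \(C^2\) only, and the regularity upgrade needed to apply the \(\sigma_2\) results in the \(k=2\) case. The upgrade is the step needing care. I would first check uniform ellipticity on compact sets via the bound \(\partial\sigma_2/\partial\lambda_i\ge c>0\) for convex \(\lambda\) with \(\sigma_2=c_0\) and \(|\lambda|\) bounded, and then invoke Evans--Krylov.
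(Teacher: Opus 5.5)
Your proposal is correct and follows the paper's proof. The paper also lifts by \(U(x,t)=u(x)+\frac{\alpha}{2}t^2\), applies Zhou--Zhu in \(\mathbb R^{n+1}\) for \(3\le k\le n\), and for \(k=2\) first upgrades \(C^2\) to \(C^\infty\) by local uniform ellipticity, Evans--Krylov and Schauder before invoking Shankar--Yuan. The only difference is that for \(\alpha=0\) the paper still uses the degenerate lift \(U(x,t)=u(x)\), whose Hessian \(\operatorname{diag}(D^2u,0)\) is convex with \(\sigma_k=c_0\); this keeps every case in the range \(k\le N-1\) and avoids your separate split into Zhou--Zhu in \(\mathbb R^n\), Shankar--Yuan and J\"orgens--Calabi--Pogorelov, though your split is also valid.
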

	
	This theorem requires no growth assumption at infinity and allows the Hessian matrix to be degenerate. The basic idea of its proof is to use the dimension-lifting identity
	\[
	\sigma_k\bigl(\tilde{H}\bigr)
	=\sigma_k(H)+\alpha\sigma_{k-1}(H),
	\quad H\in\operatorname{Sym}(n),
	\]
	where \(\tilde{H}=
	\begin{pmatrix}
		H & 0       \\
		0 & \alpha
	\end{pmatrix}\). Specifically, by constructing
	\[
	U(x,t)=u(x)+\frac{\alpha}{2}t^2,
	\]
	one can transform the original sum-Hessian equation exactly into the standard \(k\)-Hessian equation in one higher dimension. Since \(\alpha\ge0\), this construction preserves convexity. For \(3\le k\le n\), we apply the existing rigidity theorem of Zhou--Zhu~\cite{ZhouZhu2026}; for \(k=2\), we apply the existing rigidity theorem of Shankar--Yuan~\cite{ShankarYuan2022} together with a local regularity argument. This paper does not reprove these external rigidity theorems; instead, it focuses on verifying the dimension-lifting construction and its applicability conditions.
	
	The rest of the paper is organized as follows. Section~\ref{sec:preliminaries} introduces preliminaries. We first prove the dimension-lifting identity for elementary symmetric polynomials and the convex lifting property at the level of functions. Then we discuss the concavity and ellipticity of the lifted operator, and state the standard rigidity theorems and local regularity results needed in the proof. Section~\ref{sec:proof} first completes the proof of the main theorem for the two cases \(3\le k\le n\) and \(k=2\), and then discusses convex entire solutions for \(k=1\).
	
	\section{Preliminaries}
	\label{sec:preliminaries}
	
	This section introduces the dimension-lifting identity, function lifting, and structural properties of the operator used in this paper, and lists the existing rigidity results needed for the proof of the main theorem.
	
	\begin{lemma}
		\label{lem:lifting-identity}
		
		Let \(H\in\operatorname{Sym}(n)\), \(\alpha\in\mathbb R\), and \(1\le k\le n\). Let \(\sigma_0(H)=1\). Then
		\begin{equation}\label{eq:lifting-identity}
			\sigma_k\left(
			\begin{pmatrix}
				H & 0       \\
				0 & \alpha
			\end{pmatrix}
			\right)
			=
			\sigma_k(H)+\alpha\sigma_{k-1}(H).
		\end{equation}
		
	\end{lemma}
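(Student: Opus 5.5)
The plan is to reduce the identity to a statement about eigenvalue vectors and then use the standard recursion for elementary symmetric polynomials. First I will observe that the block matrix \(\tilde H=\begin{pmatrix} H&0\\0&\alpha\end{pmatrix}\in\operatorname{Sym}(n+1)\) has eigenvalue vector, up to ordering, equal to \((\lambda_1(H),\dots,\lambda_n(H),\alpha)\). This holds because if \(H=P\Lambda P^T\) with \(P\) orthogonal, then \(\operatorname{diag}(P,1)\) is an orthogonal \((n+1)\times(n+1)\) matrix that diagonalizes \(\tilde H\) to \(\operatorname{diag}(\Lambda,\alpha)\). Equivalently, the characteristic polynomial factors as \(\det(tI-\tilde H)=\det(tI-H)(t-\alpha)\). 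Since \(\sigma_k\) is a symmetric function of the eigenvalues, the ordering is irrelevant. So \(\sigma_k(\tilde H)=\sigma_k(\lambda_1,\dots,\lambda_n,\alpha)\), where \(\sigma_k\) is now taken in \(n+1\) variables.

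Second, I will prove the purely algebraic identity
\[
\sigma_k(\lambda_1,\dots,\lambda_n,\lambda_{n+1})=\sigma_k(\lambda_1,\dots,\lambda_n)+\lambda_{n+1}\,\sigma_{k-1}(\lambda_1,\dots,\lambda_n),
\]
valid for \(1\le k\le n\) with \(\sigma_0=1\). To do this, split the index sets \(\{i_1<\dots<i_k\}\subset\{1,\dots,n+1\}\) in definition \eqref{eq:elementary} into those not containing \(n+1\) and those containing \(n+1\). The first family sums to \(\sigma_k(\lambda_1,\dots,\lambda_n)\). Each set in the second family consists of \(n+1\) together with a \((k-1)\)-subset of \(\{1,\dots,n\}\), so that family sums to \(\lambda_{n+1}\sigma_{k-1}(\lambda_1,\dots,\lambda_n)\). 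When \(k=1\), the second family is the single set \(\{n+1\}\), which gives \(\lambda_{n+1}\cdot 1\); this is consistent with \(\sigma_0=1\). As an alternative check, I can compare the coefficients of \(t^{n+1-k}\) in \(\prod_{i=1}^{n+1}(t+\lambda_i)=(t+\lambda_{n+1})\prod_{i=1}^n(t+\lambda_i)\). Setting \(\lambda_{n+1}=\alpha\) then yields \eqref{eq:lifting-identity}.

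There is no real obstacle. The only point needing care is the first step, namely justifying that the spectrum of the block-diagonal matrix is the union of the spectra of its blocks, with multiplicity. The characteristic polynomial factorization handles this cleanly. I will also note explicitly that the hypothesis \(k\le n\) guarantees that \(\sigma_k(H)\) is defined in the paper's convention, and that no sign condition on \(\alpha\) is needed.
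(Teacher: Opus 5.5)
Your proposal is correct and follows essentially the same route as the paper: you identify the eigenvalues of the block-diagonal matrix as \((\lambda_1,\dots,\lambda_n,\alpha)\) and split the defining sum according to whether the index \(n+1\) appears. Your justification of the spectrum via \(\operatorname{diag}(P,1)\), or via the factorization of the characteristic polynomial, makes explicit a step that the paper states without comment.
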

	
	\begin{proof}
		Let \(\lambda_1,\cdots,\lambda_n\) be the eigenvalues of \(H\), counted with multiplicity. The block diagonal matrix
		\[
		\widetilde H=
		\begin{pmatrix}
			H & 0       \\
			0 & \alpha
		\end{pmatrix}
		\]
		has eigenvalues
		\[
		\lambda_1,\cdots,\lambda_n,\alpha.
		\]
		Write \(\widetilde\lambda=(\lambda_1,\cdots,\lambda_n,\alpha)\). By the definition of elementary symmetric polynomials,
		\[
		\sigma_k(\widetilde H)
		=
		\sum_{1\le i_1<\cdots<i_k\le n+1}
		\widetilde\lambda_{i_1}\cdots\widetilde\lambda_{i_k}.
		\]
		We split this sum into two parts according to whether it contains the new eigenvalue \(\alpha\).
		
		(1) The part not containing \(\alpha\) equals
		\(
		\sigma_k(H).
		\)
		(2) The part containing \(\alpha\) is obtained by choosing \(k-1\) eigenvalues from \(\lambda_1,\cdots,\lambda_n\), and hence equals
		\(
		\alpha\sigma_{k-1}(H).
		\)
		For \(k=1\), this remains valid because \(\sigma_0(H)=1\).
		
		Adding the two parts gives
		\[
		\sigma_k(\widetilde H)
		=
		\sigma_k(H)+\alpha\sigma_{k-1}(H).
		\]
		That is,
		\[
		\sigma_k\left(
		\begin{pmatrix}
			H & 0       \\
			0 & \alpha
		\end{pmatrix}
		\right)
		=
		\sigma_k(H)+\alpha\sigma_{k-1}(H).
		\]
		This proves the lemma.
	\end{proof}
	
	\begin{lemma}
		\label{lem:lifting-map}
		
		Let \(\alpha\ge 0\) be a constant, and let \(u\in C^2(\mathbb R^n)\) be convex. Define
		\begin{equation}\label{eq:lifted-function}
			U(x,t)=u(x)+\frac{\alpha}{2}t^2,\quad (x,t)\in\mathbb R^{n+1}.
		\end{equation}
		Then \(U\in C^2(\mathbb R^{n+1})\) is convex, and
		\begin{equation}\label{eq:lifted-hessian}
			D_{x,t}^2U=
			\begin{pmatrix}
				D_x^2u & 0       \\
				0      & \alpha
			\end{pmatrix}.
		\end{equation}
		Moreover, \(u\) satisfies
		\[
		\sigma_k(D^2u)+\alpha\sigma_{k-1}(D^2u)=c_0 \quad\text{in }\mathbb R^n
		\]
		if and only if \(U\) satisfies
		\begin{equation}\label{eq:lifted-equation}
			\sigma_k(D^2U)=c_0 \quad\text{in }\mathbb R^{n+1}.
		\end{equation}
		
	\end{lemma}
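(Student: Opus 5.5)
The plan is a direct computation with three parts: regularity of \(U\), its Hessian and convexity, and the equivalence of the two equations via Lemma~\ref{lem:lifting-identity}.

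\emph{Regularity and Hessian.} Since \(u\in C^2(\mathbb R^n)\) and \(t\mapsto \frac{\alpha}{2}t^2\) is smooth, the sum \(U(x,t)=u(x)+\frac{\alpha}{2}t^2\) lies in \(C^2(\mathbb R^{n+1})\). Differentiating in each variable separately gives
\[
U_{x_ix_j}=u_{x_ix_j},\qquad U_{x_it}=0,\qquad U_{tt}=\alpha .
\]
The mixed derivatives vanish because \(u\) does not depend on \(t\) and \(\frac{\alpha}{2}t^2\) does not depend on \(x\). This yields the block form \eqref{eq:lifted-hessian}.

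\emph{Convexity.} For \((\xi,\tau)\in\mathbb R^n\times\mathbb R\) I would compute
\[
(\xi,\tau)\,D^2U\,(\xi,\tau)^T=\xi^TD^2u\,\xi+\alpha\tau^2 .
\]
The first term is nonnegative by convexity of \(u\), and the second because \(\alpha\ge0\). Hence \(D^2U\ge0\) everywhere, and since \(U\in C^2\) this is equivalent to convexity of \(U\). Alternatively, one can note that \(U\) is a sum of two convex functions on \(\mathbb R^{n+1}\).

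\emph{Equivalence of the equations.} At each point \((x,t)\), apply Lemma~\ref{lem:lifting-identity} with \(H=D^2u(x)\) to get
\[
\sigma_k(D^2U(x,t))=\sigma_k(D^2u(x))+\alpha\sigma_{k-1}(D^2u(x)).
\]
The right-hand side does not depend on \(t\). So if \(u\) solves \eqref{eq:sum-hessian} on \(\mathbb R^n\), then \(U\) solves \eqref{eq:lifted-equation} at every \((x,t)\). Conversely, if \(U\) solves \eqref{eq:lifted-equation}, evaluating at \(t=0\) for each \(x\) recovers \eqref{eq:sum-hessian}.

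There is no real obstacle. The only points needing care are that the lemma applies pointwise for \(1\le k\le n\) (and here \(k\le n<n+1\), so \(\sigma_k\) makes sense in dimension \(n+1\)), and that \(\alpha\ge0\) is used only for convexity, not for the identity.
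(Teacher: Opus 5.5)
Your proposal is correct and follows essentially the same route as the paper: compute the block-diagonal Hessian directly, get convexity from \(\xi^TD^2u\,\xi+\alpha\tau^2\ge 0\), apply Lemma~\ref{lem:lifting-identity} pointwise to see that \(\sigma_k(D^2U)\) does not depend on \(t\), and recover the original equation by evaluating at \(t=0\). Your observation that \(\alpha\ge 0\) is used only for convexity and not for the identity is accurate.
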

	
	\begin{proof}
		Let \(u\in C^2(\mathbb R^n)\) be a convex function, and let \(\alpha\ge 0\). Define
		\[
		U(x,t)=u(x)+\frac{\alpha}{2}t^2,\quad (x,t)\in\mathbb R^n\times\mathbb R.
		\]
		Direct differentiation gives
		\[
		D_{x,t}^2U(x,t)
		=
		\begin{pmatrix}
			D_x^2u(x) & 0       \\
			0         & \alpha
		\end{pmatrix},
		\]
		that is, \eqref{eq:lifted-hessian}. Since \(u\) is convex, \(D_x^2u(x)\ge 0\). Also \(\alpha\ge 0\). Hence for any
		\(\zeta=(\xi,s)\in\mathbb R^{n+1}\),
		\[
		\zeta^TD_{x,t}^2U(x,t)\zeta
		=\xi^TD_x^2u(x)\xi+\alpha s^2\ge 0.
		\]
		Therefore \(D_{x,t}^2U\ge 0\), and so \(U\) is convex on \(\mathbb R^{n+1}\).
		
		Let \(\lambda_1(x),\cdots,\lambda_n(x)\) be the eigenvalues of \(D_x^2u(x)\). Then the eigenvalues of
		\(D_{x,t}^2U(x,t)\) are
		\(\lambda_1(x),\cdots,\lambda_n(x),\alpha\). Thus, by the definition of elementary symmetric polynomials,
		\[
		\sigma_k(D_{x,t}^2U(x,t))
		=
		\sigma_k(D_x^2u(x))
		+\alpha\sigma_{k-1}(D_x^2u(x)),
		\]
		with the convention \(\sigma_0=1\). This is exactly the lifting identity in Lemma~\ref{lem:lifting-identity}, and the right side does not depend on \(t\).
		
		If
		\[
		\sigma_k(D_x^2u)+\alpha\sigma_{k-1}(D_x^2u)=c_0
		\quad\text{in }\mathbb R^n,
		\]
		then for any \((x,t)\in\mathbb R^{n+1}\), the lifting identity gives
		\[
		\sigma_k(D_{x,t}^2U(x,t))=c_0,
		\]
		so the lifted \(k\)-Hessian equation holds.
		
		Conversely, if
		\[
		\sigma_k(D_{x,t}^2U)=c_0
		\quad\text{in }\mathbb R^{n+1},
		\]
		then take any \(x\in\mathbb R^n\). At \((x,0)\) we have
		\[
		c_0=\sigma_k(D_{x,t}^2U(x,0))
		=\sigma_k(D_x^2u(x))
		+\alpha\sigma_{k-1}(D_x^2u(x)).
		\]
		Since \(x\) is arbitrary, the original summed Hessian equation holds. Therefore
		\[
		\sigma_k(D_x^2u)+\alpha\sigma_{k-1}(D_x^2u)=c_0
		\ \text{in }\mathbb R^n
		\iff
		\sigma_k(D_{x,t}^2U)=c_0
		\ \text{in }\mathbb R^{n+1}.
		\]
		Together with regularity, the Hessian representation, and the convexity of \(U\), this proves the lemma.
	\end{proof}
	
	Suppose \(u\) is a convex entire solution of \eqref{eq:sum-hessian}. By Lemma~\ref{lem:lifting-map}, the lifted function \(U\) satisfies \(D^2U \ge 0\) and \(\sigma_k(D^2U) = c_0 > 0\). Hence, \(D^2U\) has at least \(k\) positive eigenvalues, so
	\[
	\sigma_j(D^2U) > 0, \quad 1 \le j \le k.
	\]
	It follows that the eigenvalues of the lifted Hessian belong to the G\r{a}rding cone \(\Gamma_k^{n+1}\). Moreover, the dimension-lifting identity can also be used to study the concavity of the sum Hessian operator. Define
	\begin{equation}\label{eq:lifted-cone}
		Q_\alpha = \left\{ H \in \operatorname{Sym}(n) : \lambda\bigl(\widetilde H\bigr) \in \Gamma_k^{n+1} \right\}.
	\end{equation}
	Combining the convexity of the standard \(k\)-Hessian elliptic cone and the concavity of \(\sigma_k^{1/k}\) on this cone (see~\cite{CNS1985}), we obtain the following conclusion.
	
	\begin{proposition}
		\label{prop:operator-structure}
		Let \(Q_\alpha\) be defined by \eqref{eq:lifted-cone}. Then \(Q_\alpha\) is open and convex, and
		\begin{equation}\label{eq:lifted-concavity}
			H\longmapsto S_k(H)^{1/k}=\sigma_k(\widetilde H)^{1/k}
		\end{equation}
		is concave on \(Q_\alpha\). Moreover, if \(\alpha\ge 0\), \(H\ge 0\), and \(S_k(H)>0\), then \(S_k\) is elliptic at \(H\).
		
	\end{proposition}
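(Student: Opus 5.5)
The plan is to pull everything back from the standard cone \(\Gamma_k^{n+1}\) through the affine map
\[
\iota_\alpha:\operatorname{Sym}(n)\to\operatorname{Sym}(n+1),\qquad \iota_\alpha(H)=\widetilde H=\begin{pmatrix} H&0\\0&\alpha\end{pmatrix}.
\]
Set \(\mathcal C_k=\{M\in\operatorname{Sym}(n+1):\lambda(M)\in\Gamma_k^{n+1}\}\). Then \(Q_\alpha=\iota_\alpha^{-1}(\mathcal C_k)\).

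\emph{Openness and convexity.} By \cite{CNS1985}, \(\Gamma_k^{n+1}\) is an open convex cone, and \(\mathcal C_k\) is open and convex in \(\operatorname{Sym}(n+1)\).
\begin{itemize}
\item Openness of \(\mathcal C_k\) follows from continuity of the ordered eigenvalues.
\item Convexity of \(\mathcal C_k\) is the standard fact that a convex symmetric set of eigenvalues gives a convex set of matrices. Concretely, \(\mathcal C_k\) is the connected component of \(\{\sigma_k>0\}\) containing \(I\). It is convex by G\aa rding's theory of hyperbolic polynomials, since \(\sigma_k(M)\) is hyperbolic with respect to \(I\).
\end{itemize}
The preimage of an open convex set under a continuous affine map is open and convex, so \(Q_\alpha\) is open and convex.

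\emph{Concavity.} By Lemma~\ref{lem:lifting-identity}, \(S_k(H)=\sigma_k(\iota_\alpha(H))\). By \cite{CNS1985}, \(M\mapsto\sigma_k(M)^{1/k}\) is concave on \(\mathcal C_k\). A concave function composed with an affine map is concave on the convex preimage, so \(H\mapsto S_k(H)^{1/k}\) is concave on \(Q_\alpha\).

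\emph{Ellipticity.} Let \(\alpha\ge0\), \(H\ge0\) and \(S_k(H)>0\). Then \(\widetilde H\ge0\) and \(\sigma_k(\widetilde H)=S_k(H)>0\). Because all eigenvalues are nonnegative, at least \(k\) of them are positive, so \(\lambda(\widetilde H)\in\Gamma_k^{n+1}\). On \(\Gamma_k^{n+1}\) one has \(\partial\sigma_k/\partial\lambda_i=\sigma_{k-1}(\lambda|i)>0\) for every \(i\) \cite{CNS1985}. Equivalently, \(\sigma_k\) is strictly increasing on \(\mathcal C_k\): for \(M\in\mathcal C_k\) and \(P\ge0\), \(P\neq0\),
\[
\frac{d}{ds}\Big|_{s=0}\sigma_k(M+sP)>0 .
\]
Now take \(P\in\operatorname{Sym}(n)\) with \(P\ge0\), \(P\ne0\). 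Then
\[
\frac{d}{ds}\Big|_{s=0} S_k(H+sP)=\frac{d}{ds}\Big|_{s=0}\sigma_k\!\left(\widetilde H+\begin{pmatrix}P&0\\0&0\end{pmatrix}s\right)>0 ,
\]
because the embedded matrix \(\operatorname{diag}(P,0)\) is nonzero and positive semidefinite. Hence the linearized operator \(\partial S_k/\partial H_{ij}\) is positive definite at \(H\), which is ellipticity.

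\emph{Main obstacle.} The one genuinely delicate step is justifying positive definiteness of \(\partial\sigma_k/\partial M\) at \(\widetilde H\), since \(\widetilde H\) may be degenerate (some eigenvalues may vanish). Diagonalize \(\widetilde H\) orthogonally. The derivative is then \(\operatorname{diag}(\sigma_{k-1}(\tilde\lambda|i))\). Each entry is positive: \(\tilde\lambda|i\) still contains at least \(k-1\) positive entries, and the other entries are nonnegative, so \(\sigma_{k-1}(\tilde\lambda|i)>0\). This holds for \(k\ge1\), with \(\sigma_0=1\) covering \(k=1\).
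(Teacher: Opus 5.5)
Your proof is correct. For openness, convexity and concavity you follow the paper's route exactly: you pull back the matrix cone $G_k^{n+1}$ and the concave function $\sigma_k^{1/k}$ through the affine lifting $H\mapsto\widetilde H$.

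The difference is in the ellipticity claim. There the paper gives no argument and simply cites \cite{LiRenWang2019}. You prove it directly with the same lifting:
\begin{itemize}
\item the hypotheses force $\widetilde H\ge0$ with at least $k$ positive eigenvalues;
\item so $D\sigma_k(\widetilde H)$ is positive definite, since its entries in the eigenbasis are $\sigma_{k-1}(\tilde\lambda\,|\,i)>0$;
\item and $DS_k(H)$ is the upper-left $n\times n$ principal block of $D\sigma_k(\widetilde H)$, hence positive definite.
\end{itemize}

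This makes the proposition self-contained and shows clearly why a degenerate $H$ causes no trouble. It also yields more than is stated. Positivity of $\sigma_{k-1}(\lambda\,|\,i)$ holds on all of $\Gamma_k^{n+1}$, so your argument shows $S_k$ is elliptic at every $H\in Q_\alpha$, for any real $\alpha$. The conditions $\alpha\ge0$, $H\ge0$, $S_k(H)>0$ serve only to place $H$ in $Q_\alpha$. For the same reason, the step you flag as the main obstacle is not actually delicate.

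The paper's citation is shorter, but it leaves the reader to check that the setting of \cite{LiRenWang2019} matches the present one.
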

	
	\begin{proof}
		
		Define the matrix cone
		\[
		G_k^{n+1}
		=\bigl\{M\in\operatorname{Sym}(n+1):
		\lambda(M)\in\Gamma_k^{n+1}\bigr\},
		\]
		and consider the lifting map
		\[
		T_\alpha:\operatorname{Sym}(n)
		\longrightarrow\operatorname{Sym}(n+1),
		\quad
		T_\alpha(H)=\widetilde H.
		\]
		
		By standard \(k\)-Hessian theory, the cone
		\(G_k^{n+1}\) is open and convex, and the function
		\[
		M\longmapsto \sigma_k(M)^{1/k}
		\]
		is concave on \(G_k^{n+1}\).
		
		Since \(T_\alpha\) is continuous and
		\[
		Q_\alpha=T_\alpha^{-1}(G_k^{n+1}),
		\]
		it follows that \(Q_\alpha\) is open.
		
		We next show that \(Q_\alpha\) is convex.
		Let \(H_1,H_2\in Q_\alpha\) and \(t\in[0,1]\).
		By the definition of \(Q_\alpha\),
		\[
		T_\alpha(H_1),\,
		T_\alpha(H_2)\in G_k^{n+1}.
		\]
		Since \(T_\alpha\) is affine, we have
		\[
		\begin{aligned}
			& T_\alpha\bigl(tH_1+(1-t)H_2\bigr) \\
			& \quad
			=tT_\alpha(H_1)+(1-t)T_\alpha(H_2).
		\end{aligned}
		\]
		The convexity of \(G_k^{n+1}\) implies that
		\[
		tT_\alpha(H_1)+(1-t)T_\alpha(H_2)
		\in G_k^{n+1}.
		\]
		Consequently,
		\[
		tH_1+(1-t)H_2\in Q_\alpha,
		\]
		which proves that \(Q_\alpha\) is convex.
		
		It remains to establish the concavity of
		\(S_k^{1/k}\) on \(Q_\alpha\).
		By Lemma~\ref{lem:lifting-identity}, we have
		\[
		S_k(H)=\sigma_k\bigl(T_\alpha(H)\bigr),
		\quad H\in Q_\alpha.
		\]
		In particular,
		\[
		S_k(H)^{1/k}
		=\sigma_k\bigl(T_\alpha(H)\bigr)^{1/k}.
		\]
		
		For arbitrary \(H_1,H_2\in Q_\alpha\)
		and \(t\in[0,1]\), the affine property of
		\(T_\alpha\), together with the concavity of
		\(\sigma_k^{1/k}\) on \(G_k^{n+1}\), yields
		\[
		\begin{aligned}
			& S_k\bigl(tH_1+(1-t)H_2\bigr)^{1/k}   \\
			& =\sigma_k\Bigl(
			T_\alpha\bigl(tH_1+(1-t)H_2\bigr)
			\Bigr)^{1/k}                            \\
			& =\sigma_k\Bigl(
			tT_\alpha(H_1)+(1-t)T_\alpha(H_2)
			\Bigr)^{1/k}                            \\
			& \geq
			t\,\sigma_k\bigl(T_\alpha(H_1)\bigr)^{1/k}
			+(1-t)\,
			\sigma_k\bigl(T_\alpha(H_2)\bigr)^{1/k} \\
			& =tS_k(H_1)^{1/k}
			+(1-t)S_k(H_2)^{1/k}.
		\end{aligned}
		\]
		Therefore, \(S_k^{1/k}\) is concave on
		\(Q_\alpha\).
		
		By Li, Ren, and Wang~\cite{LiRenWang2019}, the operator is elliptic.
	\end{proof}
	
	\begin{remark}
		\label{rem:local-ellipticity}
		Proposition~\ref{prop:operator-structure} gives pointwise ellipticity at matrices satisfying the above conditions. For \(C^2\) solutions, the continuity of the Hessian and the linearization coefficients yields locally uniform ellipticity near each such point. However, without further global estimates, one cannot conclude uniform ellipticity on the whole space.
	\end{remark}
	
	Below we list the two existing rigidity theorems needed for the proof of the main theorem, and supplement them with a local regularity argument from \(C^2\) convex solutions to smooth solutions.
	
	\begin{theorem}\cite{ZhouZhu2026}
		\label{thm:zhou-zhu}
		Let \(N\geq 4\), \(3\leq k\leq N-1\), and \(c_0>0\). If \(V\in C^2(\mathbb R^N)\) satisfies
		\[
		D^2V\geq0,\quad \sigma_k(D^2V)=c_0 \quad\text{in }\mathbb R^N,
		\]
		then
		\[
		V(x)=\frac12x^TQx+b\cdot x+d,
		\]
		where \(Q\) is a constant positive semidefinite matrix, \(b\in\mathbb R^N\), and \(d\in\mathbb R\).
		
	\end{theorem}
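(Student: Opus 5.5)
The plan is to reduce the statement to the known rigidity theorems that assume quadratic growth, in three steps: regularity, splitting off a flat factor by a constant rank argument, and then recovering quadratic growth for the strictly convex factor. This last step is the main obstacle.

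\textbf{Step 1: Regularity.} Since \(V\in C^2\) is convex and \(\sigma_k(D^2V)=c_0>0\), the eigenvalues of \(D^2V\) lie in \(\overline{\Gamma_N}\cap\{\sigma_k=c_0\}\subset\Gamma_k^N\). On every ball the Hessian is bounded and continuous, so the linearized operator \(\sigma_k^{ij}(D^2V)\) is uniformly elliptic there. Since \(\sigma_k^{1/k}\) is concave on \(\Gamma_k^N\), Evans--Krylov gives \(V\in C^{2,\beta}_{\rm loc}\). Schauder bootstrapping then gives \(V\in C^\infty(\mathbb R^N)\).

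\textbf{Step 2: Splitting.} For convex solutions of \(\sigma_k(D^2V)=f\) with \(f^{-1/k}\) convex, and in particular \(f\equiv c_0\), I would invoke the constant rank theorem of Caffarelli--Guan--Ma type. It says \(D^2V\) has constant rank \(r\) on \(\mathbb R^N\), and \(r\ge k\) because \(\sigma_k>0\).

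The kernel distribution of \(D^2V\) is then smooth and integrable. For a convex function with constant-rank Hessian, \(V\) is affine along each kernel leaf, and the leaves are affine subspaces. I expect a standard argument to show these subspaces are parallel on all of \(\mathbb R^N\). After a rotation this should give
\[
V(x',x'')=W(x')+\ell(x''),\qquad x'\in\mathbb R^r,
\]
where \(\ell\) is affine, \(W\) is smooth and strictly convex (\(D^2W>0\)) on \(\mathbb R^r\), and \(\sigma_k(D^2W)=c_0\).

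It therefore suffices to prove that \(W\) is quadratic. If \(r=k\), this is the Monge--Amp\`ere equation in \(\mathbb R^r\), and J\"orgens--Calabi--Pogorelov applies. So the remaining case is \(k<r\le N\).

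\textbf{Step 3: Quadratic growth for \(W\).} Once we know \(W(x')\ge b|x'|^2-B\), the Bao--Chen--Guan--Ji theorem (or Li--Ren--Wang, since a convex \(W\) is \((k+1)\)-admissible up to the boundary of the cone) finishes the proof. I expect this step to be the main obstacle.

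First, subtract the tangent plane at \(0\) so that \(W\ge W(0)=0\). Strict convexity of an entire function then makes all sublevel sets \(\{W<h\}\) bounded convex sets.

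Next, I would normalize these sections by John's lemma: take \(A_h\) affine with \(A_h(\{W<h\})\approx B_1\), and set \(W_h=h^{-1}W\circ A_h^{-1}\). This \(W_h\) solves
\[
\sigma_k\bigl(D^2W_h\bigr)=c_0\,h^{-k}\,\sigma_k\text{-rescaled by the axes of the section}.
\]
A Pogorelov-type interior estimate for \(\sigma_k\) on sections, valid for convex solutions, bounds \(D^2W_h\) in the half-section. This should force the axis lengths \(d_1\le\dots\le d_r\) of \(\{W<h\}\) to satisfy a two-sided inequality of the form \(\prod_{\text{any }k}d_i^{-2}\sim h^{-k}\).

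Then I would use convexity and a blow-down argument to exclude degenerate shapes. Suppose \(d_r/d_1\to\infty\) along a sequence of heights \(h\to\infty\). The rescalings would then converge to a convex solution in fewer variables with a Hessian kernel, contradicting the strict convexity already obtained via Step 2 applied to the limit.

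This would show that the sections are comparable to balls of radius \(\sim\sqrt h\), which is the required quadratic growth. This is essentially the mechanism in Du's sublevel-set criterion. The delicate point, where I expect to need \(k\ge 3\), is making the Pogorelov estimate and the blow-down compactness work without any a priori growth assumption. If the argument stalls, the first thing I would try is to control \(\sigma_{k-1}\) of the rescaled Hessians from below using the Newton--Maclaurin inequalities.
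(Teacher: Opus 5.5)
The paper gives no proof of this statement. It is imported from Zhou--Zhu and used as a black box, so your argument has to stand on its own.

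\textbf{Steps 1 and 2 are sound.} Regularity goes as you say: localize the concave operator as in Lemma~\ref{lem:regularity}. The Caffarelli--Guan--Ma constant rank theorem applies because the right-hand side is constant. The parallelism you left as ``expected'' can be closed as follows.
Each level set \(L_p=\{\nabla V=p\}\) is the minimum set of the convex function \(V-p\cdot x\), hence convex. By the differential-topology rank theorem applied to \(\nabla V\), it is locally an \((N-r)\)-manifold without boundary. So it is a whole affine plane \(x_p+E_p\).
Since \(L_p=\partial V^*(p)\), each \(e\in E_p\) satisfies \(e\cdot(q-p)=0\) for all \(q\in\operatorname{dom}V^*\), that is, \(\operatorname{dom}V^*\subset p+E_p^{\perp}\). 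Because \(\operatorname{dom}V^*\) contains the locally \(r\)-dimensional gradient image, \(E_p^{\perp}\) is the direction of its affine hull, independent of \(p\).
Thus \(V=W(x')+b''\cdot x''\), and the case \(r=k\) is J\"orgens--Calabi--Pogorelov.

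\textbf{The gap is Step 3.} This step is the entire content of the theorem, and the mechanism you sketch does not work for \(\sigma_k\) with \(k<r\).
John normalization does not preserve the equation: \(\sigma_k\) is invariant under rotations and dilations, but not under general affine maps. If \(\{W<h\}\) has axes \(d_1\le\cdots\le d_r\) and \(D=\operatorname{diag}(d_1,\dots,d_r)\), the normalized function satisfies \(\sigma_k(D^{-1}D^2W_hD^{-1})=c_0h^{-k}\). This is a family of operators whose structure degenerates as \(d_r/d_1\to\infty\). So no Pogorelov estimate on normalized sections is uniform in \(h\) unless you already know the sections are balanced, which is what you are trying to prove.
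The available Pogorelov estimates for \(\sigma_k\) (Chou--Wang, Li--Ren--Wang) are isotropic. Their constants depend on \(h\) and \(\sup_{\{W<h\}}|DW|\), exactly the quantities that are uncontrolled without a growth hypothesis. They bound \(\lambda_{\max}(D^2W)\), not the directional quantities \(d_i^2W_{ii}/h\) that \(D^2W_h\le C\) requires. Consequently the relation \(\sigma_k(d_1^{-2},\dots,d_r^{-2})\sim h^{-k}\) is never derived.

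\textbf{The blow-down step gives no contradiction.} Strict convexity of \(W\) is not inherited by limits of anisotropic rescalings. The limiting operator is a degenerate weighted sum \(\sum_{|I|=k}w_I\det M_{II}\) of principal minors; in the extreme case it is Monge--Amp\`ere on a \(k\times k\) sub-block.
A limit with a Hessian kernel is perfectly consistent with this. A constant rank argument, even if it applied to the limiting operator, would only split the limit rather than contradict anything.
To close the argument you would need a rigidity statement for that degenerate limit problem, plus a way to transfer it back to \(W\). For Monge--Amp\`ere this role is played by affine invariance and Caffarelli's strict convexity theory, and that is exactly what is unavailable when \(k<r\). Invoking Du's criterion only converts the problem into verifying a condition on the sublevel sets, which is again the missing step.

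As it stands, you have reduced the theorem to the claim that strictly convex entire solutions of \(\sigma_k(D^2W)=c_0\) in \(\mathbb R^r\), \(r>k\), have quadratic growth. That claim remains unproved.
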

	
	\begin{theorem}\cite{ShankarYuan2022}
		\label{thm:shankar-yuan}
		Let \(V:\mathbb R^N\to\mathbb R\) be a smooth semiconvex function satisfying
		\[
		\sigma_2(D^2V)=1 \quad\text{in }\mathbb R^N,
		\]
		and assume that there exists a constant \(K>0\) such that
		\[
		D^2V\geq-KI_N,
		\]
		where \(I_N\) denotes the \(N\times N\) identity matrix. Then \(V\) is a quadratic polynomial.
	\end{theorem}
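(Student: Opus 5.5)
Since this is an external result, the plan is only a sketch of how one would establish Theorem~\ref{thm:shankar-yuan}; in the present paper it is used as a black box. The overall strategy is a scaling argument: first obtain a global bound on \(D^2V\), then use uniform ellipticity together with Evans--Krylov theory to force \(D^2V\) to be constant. The semiconvexity bound \(D^2V\ge -KI_N\) is invariant under the rescaling
\[
V_R(y)=R^{-2}V(Ry),
\]
which also preserves the equation \(\sigma_2(D^2V_R)=1\). So any estimate for \(D^2V\) that depends only on \(N\), \(K\) and a normalized quantity of \(V_R\) on \(B_1\) is automatically scale invariant.

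\textbf{Step 1 (growth).} Show that \(V\) has at most quadratic growth, i.e. \(\operatorname{osc}_{B_R}V\le C(N,K)R^2\) for \(R\ge1\).
\begin{itemize}
\item From \(\sigma_2=1\) and \(\lambda_i\ge -K\) we get \(\Delta V>0\). Moreover, each eigenvalue satisfies \(\lambda_i\,\sigma_1(\lambda|i)\le 1+\sigma_2(\lambda|i)\)-type relations, which bound the negative directions in terms of \(K\).
\item I would compare \(V+\frac K2|x|^2\), which is convex, against quadratic barriers on its sublevel sets.
\item Normalizing these sublevel sets by John's lemma, I would aim to show they are comparable to balls. Semiconvexity plus the equation should prevent degeneration: a very thin section would force one eigenvalue to be huge while another is close to \(-K\), and \(\sigma_2=1\) rules this out quantitatively.
\end{itemize}

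\textbf{Step 2 (interior Hessian estimate).} Prove an estimate of the form
\[
|D^2V(0)|\le C\bigl(N,K,\|V\|_{L^\infty(B_1)}\bigr)
\]
for semiconvex solutions in \(B_1\).
\begin{itemize}
\item Let \(b=\ln(\lambda_{\max}+K+1)\). Using \(\sigma_2\)-concavity and the lower bound on eigenvalues, derive a Jacobi inequality \(\Delta_F b\ge \varepsilon|\nabla_F b|^2\) for the linearized operator \(F^{ij}=\partial\sigma_2/\partial u_{ij}\).
\item Integrate this inequality on the gradient graph \((x,DV(x))\) and combine it with a Michael--Simon type mean value inequality to bound \(b(0)\) by an integral of \(\Delta V\).
\item That integral is controlled by the oscillation of \(DV\), hence by \(\|V\|_{L^\infty(B_2)}\) via convexity of \(V+\frac K2|x|^2\).
\item Applying this to \(V_R\) with the growth bound from Step 1 gives \(|D^2V|\le C\) on all of \(\mathbb R^N\).
\end{itemize}

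\textbf{Step 3 (Liouville).} With \(D^2V\) globally bounded and \(D^2V\ge -KI_N\), the equation \(\sigma_2=1\) is uniformly elliptic along the solution, and \(\sigma_2^{1/2}\) is concave on the relevant cone.
\begin{itemize}
\item Evans--Krylov gives \([D^2V_R]_{C^\alpha(B_{1/2})}\le C\) uniformly in \(R\).
\item Scaling back, \([D^2V]_{C^\alpha(B_{R/2})}\le CR^{-\alpha}\to0\) as \(R\to\infty\), so \(D^2V\) is constant and \(V\) is a quadratic polynomial.
\end{itemize}

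\textbf{Main obstacle.} I expect the main difficulty to be Steps 1--2 together: obtaining a Hessian bound that depends only on \(K\) (and scale-invariant data) without assuming growth. The Jacobi inequality for \(\sigma_2\) is known to degenerate when some eigenvalues are negative. I would first try to use the semiconvexity margin \(\lambda_i+K\ge0\) to absorb the bad third-order terms in the computation of \(\Delta_F b\). If that fails, I would perform a Lewy--Yuan rotation of the gradient graph, turning the semiconvex problem into a convex one where the known convex estimates apply.
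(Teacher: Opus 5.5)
The paper gives no proof of this statement. It is quoted from Shankar--Yuan and used as a black box; the only work done here is checking its hypotheses when it is applied (smoothness via Lemma~\ref{lem:regularity}, rescaling $c_0$ to $1$, and convex $\Rightarrow$ semiconvex). So your sketch has to stand on its own. Step~3 is standard and fine once a global Hessian bound is available. The genuine gap is in Step~1, which is exactly what separates this theorem from rigidity results under growth assumptions.

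First, a bound $\operatorname{osc}_{B_R}V\le C(N,K)R^2$ with a universal constant is false. The function $V=\tfrac12(Mx_1^2+M^{-1}x_2^2)$ has $D^2V=\operatorname{diag}(M,M^{-1},0,\dots,0)\ge0$ and $\sigma_2(D^2V)=1$, yet $\operatorname{osc}_{B_R}V\ge MR^2/2$ for every $M>0$. More seriously, the section argument cannot work. For $N\ge3$ this $V$ has unbounded cylindrical sublevel sets, which is exactly what the paper's lifting produces when $\alpha=0$, so John's lemma does not even apply. The sections of $V+\tfrac K2|x|^2$ are bounded but have eccentricity of order $\sqrt{M/K}$, which is unbounded over genuine solutions. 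The heuristic you invoke is also false pointwise: $\sigma_2=1$ with all $\lambda_i\ge-K$ allows one huge eigenvalue next to small or nearly $-K$ ones. Examples are $(M,M^{-1},0,\dots,0)$, or, for $N=3$ and $M>K$, the vector $\bigl(M,-K,(1+MK)/(M-K)\bigr)$, which lies in $\Gamma_2^3$ and has $\sigma_2=1$. Nothing at the level of the equation prevents degeneration, so Step~1 as planned restates the difficulty rather than resolving it.

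What semiconvexity does give for free is control after the Lewy--Yuan rotation. For $0<\theta<\tfrac\pi2-\arctan K$, the map $x\mapsto\cos\theta\,x+\sin\theta\,DV(x)$ is the gradient of a uniformly convex function, hence a global diffeomorphism of $\mathbb R^N$. The rotated potential $\bar V$ is therefore entire, and the eigenvalues of $D^2\bar V$ are $\tan(\arctan\lambda_i-\theta)\in\bigl[-\tan(\arctan K+\theta),\cot\theta\bigr)$. So $\bar V$ has bounded Hessian and quadratic growth automatically, with no assumption on $V$. That is where the rotation has to enter: as the substitute for your Step~1, with the Jacobi-inequality and mean-value machinery run in the rotated variables. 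It cannot serve as a fallback that makes the problem convex, because a rotation making $V$ convex needs an upper Hessian bound, which is what you are trying to prove.

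There are two further points.
\begin{enumerate}
\item In Step~2, the Michael--Simon inequality needs mean curvature control of the gradient graph. That graph is special Lagrangian (phase $\tfrac\pi2$) only for $N=2,3$, where $\sigma_2=1$ is equivalent to $\sum_i\arctan\lambda_i=\tfrac\pi2$. For $N\ge4$, which is the case this paper needs whenever $n\ge3$, there is no such area structure, and a different mean value inequality is required.
\item The conditions $\sigma_2=1$ and $\lambda_i\ge-K$ do not imply $\Delta V>0$; take $\lambda=(-1,-1,0,\dots,0)$ with $K\ge1$. You must first dispose of the branch $\Delta V<0$. This is easy: then $-V$ is a solution in $\Gamma_2^N$ with $D^2(-V)\le KI_N$, hence has bounded Hessian, and your Step~3 applies directly.
\end{enumerate}
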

	
	To apply Theorem~\ref{thm:shankar-yuan}, note that the positive constant \(c\) on the right-hand side can be normalized to \(1\) by scaling \(\widetilde V=c^{-1/2}V\). Since we only assume that the solution is \(C^2\), we first prove the required local regularity.
	
	\begin{lemma}
		\label{lem:regularity} Let \(V\in C^2(\mathbb R^N)\) be convex and satisfy
		\[
		\sigma_2(D^2V)=c>0 \quad\text{in }\mathbb R^N.
		\]
		Then \(V\in C^\infty(\mathbb R^N)\).
		
	\end{lemma}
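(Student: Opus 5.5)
The proof is local, so fix a ball \(B\subset\mathbb R^N\) and show that \(V\) is smooth in a smaller concentric ball. The plan is to upgrade \(C^2\) to \(C^{2,\beta}\) with a fully nonlinear regularity theorem for concave, uniformly elliptic equations, and then bootstrap with Schauder theory.

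\emph{Step 1: the operator is uniformly elliptic near \(\overline B\).} Since \(D^2V\ge0\) and \(\sigma_2(D^2V)=c>0\), the eigenvalues \(\lambda(D^2V(x))\) lie in \(\Gamma_2^N\) at every point, and in fact \(\sigma_1(D^2V)>0\) everywhere. Because \(V\in C^2\), the Hessian \(D^2V\) is continuous, so on \(\overline B\) the eigenvalues stay in a compact set \(K\subset\Gamma_2^N\). Compactness holds because \(\sigma_2=c\) keeps the set away from \(\partial\Gamma_2^N\), and continuity bounds it. The linearized coefficients are
\[
\frac{\partial\sigma_2}{\partial\lambda_i}=\sigma_1(\lambda)-\lambda_i .
\]
They are continuous on \(\Gamma_2^N\) and positive there, so on \(K\) they are bounded between two positive constants. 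This yields uniform ellipticity of \(F(M)=\sigma_2(M)^{1/2}\) on a neighborhood \(\mathcal O\) of \(\{D^2V(x):x\in\overline B\}\), in the spirit of Remark~\ref{rem:local-ellipticity}.

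\emph{Step 2: extend \(F\) globally.} To apply Evans--Krylov, modify \(F\) outside \(\mathcal O\) into a globally concave, uniformly elliptic operator \(\widetilde F\) that agrees with \(F\) on a smaller convex neighborhood of the Hessian range. One way is to take the infimum of the supporting affine functions of \(F\) at points of \(\mathcal O\), using the concavity of \(\sigma_2^{1/2}\) on \(\Gamma_2^N\) from \cite{CNS1985}. An alternative that avoids the extension is a version of Evans--Krylov for \(C^2\) (hence \(C^{1,1}\)) solutions of equations that are concave and uniformly elliptic only near the solution's Hessian range. Either way, \(V\) is a \(C^{1,1}\) solution of \(\widetilde F(D^2V)=\sqrt c\) on \(B\), and Evans--Krylov gives \(V\in C^{2,\beta}(B/2)\).

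\emph{Step 3: bootstrap.} Differentiate the equation in a direction \(e\) via difference quotients. The function \(w=\partial_eV\) solves the linear equation \(a^{ij}w_{ij}=0\), where
\[
a^{ij}=\frac{\partial\sigma_2}{\partial M_{ij}}(D^2V).
\]
These coefficients are uniformly elliptic and lie in \(C^{\beta}\). Schauder estimates give \(w\in C^{2,\beta}\), so \(V\in C^{3,\beta}\). Iterating gives \(V\in C^\infty\) locally, and since \(B\) was arbitrary, \(V\in C^\infty(\mathbb R^N)\).

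The main obstacle is Step 2. Evans--Krylov is stated for operators that are concave and uniformly elliptic on all of \(\operatorname{Sym}(N)\), so the extension has to be checked carefully. A difference-quotient argument is also needed to justify differentiating an equation whose solution is only \(C^{2,\beta}\).
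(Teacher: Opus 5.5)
Your proposal is correct and follows essentially the same route as the paper: local uniform ellipticity of $\sigma_2^{1/2}$ near the Hessian range, then extension to a globally concave, uniformly elliptic operator as the infimum of its tangent affine functions (exactly the paper's $\widehat G$), then Evans--Krylov for $C^{2,\beta}$, and finally a difference-quotient/Schauder bootstrap. The only cosmetic difference is in the localization: the paper works at a single point $X_0$ with a small convex matrix ball around $D^2V(X_0)$, which keeps the averaged difference-quotient coefficients automatically in the uniformly elliptic region, whereas you use a ball in $\mathbb R^N$ and a neighborhood of its whole Hessian range, where uniform continuity of $D^2V$ gives the same conclusion.
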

	
	\begin{proof} Fix \(X_0\in\mathbb R^N\), and set \(M_0=D^2V(X_0)\). The convexity of \(V\) implies \(M_0\ge0\). Since \(\sigma_2(M_0)=c>0\), \(M_0\) has at least two positive eigenvalues.
		
		Let \(G(M)=\sigma_2(M)^{1/2}\). Using the identity
		\[
		\sigma_2(M)=\frac12\bigl[(\operatorname{tr}M)^2-\operatorname{tr}(M^2)\bigr],
		\]
		we obtain
		\[
		DG(M)=
		\frac{(\operatorname{tr}M)I-M}{2\sigma_2(M)^{1/2}}.
		\]
		Let the eigenvalues of \(M_0\) be \(\lambda_1,\cdots,\lambda_N\). Since \(\lambda_i\ge0\) and at least two eigenvalues are strictly positive, the eigenvalues of \(DG(M_0)\) satisfy
		\[
		\frac{\displaystyle\sum_{j\ne i}\lambda_j}{2\sqrt c}>0,\quad 1\le i\le N.
		\]
		Hence \(DG(M_0)\) is positive definite.
		
		By standard \(2\)-Hessian theory, \(G=\sigma_2^{1/2}\) is smooth and concave on the set of matrices corresponding to the elliptic cone \(\Gamma_2^N\). By the continuity of \(DG\), one can choose a bounded convex matrix ball \(B\) containing \(M_0\), whose closure is contained in this matrix elliptic cone, and there exist constants \(0<\lambda\le\Lambda\) such that
		\[
		\lambda I\le DG(P)\le\Lambda I,\quad P\in B.
		\]
		By the continuity of \(D^2V\), one can choose a sufficiently small neighborhood of \(X_0\) such that all Hessians in it lie in \(B\).
		
		To apply the Evans--Krylov regularity theorem, define
		\[
		\widehat G(M)=
		\inf_{P\in B}\left\{G(P)+\operatorname{tr}\!\left(DG(P)(M-P)\right)\right\},
		\quad M\in\operatorname{Sym}(N),
		\]
		Using the trace of matrix products, concavity gives
		\[
		G(M)\le G(P)+\operatorname{tr}\!\left(DG(P)(M-P)\right),\quad M,P\in B.
		\]
		Taking the infimum over \(P\in B\) and then taking \(P=M\), we obtain \(\widehat G(M)=G(M)\) for \(M\in B\). Moreover, since \(G\) and \(DG\) are bounded on \(\overline B\), \(\widehat G\) is finite-valued and, as the infimum of affine functions, is concave. By the derivative bounds above, for any \(M\in\operatorname{Sym}(N)\) and positive semidefinite matrix \(Q\),
		\[
		\lambda\operatorname{tr}Q
		\le
		\widehat G(M+Q)-\widehat G(M)
		\le
		\Lambda\operatorname{tr}Q.
		\]
		Hence \(\widehat G\) is a concave uniformly elliptic operator defined on the entire space of symmetric matrices, and in the chosen spatial neighborhood,
		\[
		\widehat G(D^2V)=G(D^2V)=\sqrt c.
		\]
		By the local Evans--Krylov regularity theorem, there exists \(\beta\in(0,1)\) such that
		\[
		V\in C_{\mathrm{loc}}^{2,\beta}
		\]
		in a neighborhood of \(X_0\).
		
		Furthermore, since \(G\) is smooth on the chosen Hessian neighborhood, we apply difference quotients to the original equation \(G(D^2V)=\sqrt c\). The coefficients of the resulting linear equation are obtained by averaging \(DG\) along line segments between nearby Hessians. Since \(B\) is convex and \(D^2V\in C^\beta\), these coefficients are uniformly elliptic and uniformly \(C^\beta\) on smaller balls. Interior Schauder estimates, followed by passing to the limit in the difference quotients, give \(V\in C_{\mathrm{loc}}^{3,\beta}\). Differentiating the original equation successively and applying Schauder estimates then gives
		\[
		V\in C_{\mathrm{loc}}^{m,\beta},\quad m\ge2.
		\]
		Since \(X_0\in\mathbb R^N\) is arbitrary, \(V\in C^\infty(\mathbb R^N)\). The proof is complete.
	\end{proof}
	
	\section{Proof of Theorem~\ref{thm:main}}
	\label{sec:proof}
	
	We now complete the proof of Theorem~\ref{thm:main}.
	
	\begin{proof}[Proof of Theorem~\ref{thm:main}] Let \(U\) be defined by \eqref{eq:lifted-function}. By Lemma~\ref{lem:lifting-map},
		\begin{equation}\label{eq:lifted-assumptions}
			U\in C^2(\mathbb R^{n+1}),\quad D^2U\geq0,\quad \sigma_k(D^2U)=c_0>0.
		\end{equation}
		First consider the case \(3\leq k\leq n\). Let \(N=n+1\). Then
		\[
		N\geq4,\quad 3\leq k\leq n=N-1.
		\]
		Hence \(U\) satisfies all the assumptions of Theorem~\ref{thm:zhou-zhu}. By Theorem \ref{thm:zhou-zhu}, \(U\) is a quadratic polynomial. In particular, when \(k=n\), after lifting we have \(k=N-1\), which is still within the range of that theorem.
		
		Next consider the case \(k=2\). By \eqref{eq:lifted-assumptions} and Lemma~\ref{lem:regularity}, \(U\in C^\infty(\mathbb R^{n+1})\). Let
		\[
		\widetilde U=c_0^{-1/2}U.
		\]
		Using the 2-homogeneity of \(\sigma_2\), we obtain
		\[
		\sigma_2(D^2\widetilde U)=c_0^{-1}\sigma_2(D^2U)=1.
		\]
		Since \(c_0>0\) and \(D^2U\geq0\), we have \(D^2\widetilde U\geq0\). Hence \(\widetilde U\) is convex, and in particular semiconvex. Therefore, by Theorem~\ref{thm:shankar-yuan}, \(\widetilde U\) is a quadratic polynomial, and so is \(U=c_0^{1/2}\widetilde U\).
		
		In summary, for any \(2\leq k\leq n\), the lifted function \(U\) is a quadratic polynomial. Restricting it to the hyperplane \(t=0\), we obtain from \eqref{eq:lifted-function}
		\[
		u(x)=U(x,0)=\frac12x^TQx+b\cdot x+d,
		\]
		where \(Q\in\operatorname{Sym}(n)\) is a constant matrix, \(b\in\mathbb R^n\), and \(d\in\mathbb R\). The theorem is proved.
	\end{proof}
	
	\begin{remark}
		\label{rem:degenerate-lifting} When \(\alpha=0\), the lifted function \(U\) does not depend on the new variable \(t\), and its Hessian has a zero eigenvalue; when \(\alpha>0\), the eigenvalue corresponding to the new direction is \(\alpha\). In both cases, the convexity requirements of the cited rigidity theorems are satisfied, because those theorems allow the Hessian to be positive semidefinite rather than strictly positive definite.
		
	\end{remark}
	
	\begin{remark}
		When \(k=1\), since \(\sigma_0=1\) and \(\sigma_1(D^2u)=\Delta u\), the original equation becomes
		\begin{equation}\label{eq:linear-case}
			\Delta u+\alpha=c_0.
		\end{equation}
		Let \(u\in C^2(\mathbb R^n)\) be a convex entire solution. Then \(D^2u\geq0\), so \(\Delta u\geq0\). Therefore, when \(c_0<\alpha\), equation \eqref{eq:linear-case} has no convex entire solution.
		
		When \(c_0=\alpha\), \eqref{eq:linear-case} gives \(\Delta u=0\). Combining \(D^2u\geq0\) and \(\operatorname{tr}(D^2u)=0\), this implies \(D^2u=0\), so \(u\) must be an affine function.
		
		When \(c_0>\alpha\), let \(C=c_0-\alpha>0\). Then \(\Delta u=C\). Since \(D^2u\geq0\) and the sum of its eigenvalues is identically \(C\), we have
		\[
		0\leq D^2u\leq CI.
		\]
		On the other hand, let \(w(x)=u(x)-\frac{C}{2n}|x|^2\). Then \(\Delta w=0\). By the smoothness of harmonic functions, \(u\in C^\infty(\mathbb R^n)\). Moreover, for any \(1\leq i,j\leq n\),
		\[
		\Delta u_{ij}=\partial_{ij}(\Delta u)=0.
		\]
		From \(0\leq D^2u\leq CI\), we obtain \(|u_{ij}(x)|\leq C\) on \(\mathbb R^n\). Hence each component of the Hessian is a bounded entire harmonic function. By Liouville's theorem for harmonic functions, each \(u_{ij}\) is constant, that is, \(D^2u=Q\), where \(Q\) is a constant positive semidefinite matrix. Integrating gives
		\[
		u(x)=\frac12x^TQx+b\cdot x+d,
		\]
		and
		\(
		Q\geq0,\quad \operatorname{tr}Q=c_0-\alpha.
		\)
		
	\end{remark}


\begin{thebibliography}{99}
		
		\bibitem{BaoChenGuanJi2003}
		J.~Bao, J.~Chen, B.~Guan, and M.~Ji,
		Liouville property and regularity of a Hessian quotient equation,
		\emph{Amer. J. Math.}, 125 (2003), 301--316.
		
		\bibitem{CaffarelliLi2003}
		L.~Caffarelli and Y.~Li,
		An extension to a theorem of J\"orgens, Calabi, and Pogorelov,
		\emph{Comm. Pure Appl. Math.}, 56 (2003), 549--583.
		
		\bibitem{CNS1985}
		L.~Caffarelli, L.~Nirenberg, and J.~Spruck,
		The Dirichlet problem for nonlinear second-order elliptic equations.
		III. Functions of the eigenvalues of the Hessian,
		\emph{Acta Math.}, 155 (1985), 261--301.
		
		\bibitem{Calabi1958}
		E.~Calabi,
		Improper affine hyperspheres of convex type and a generalization
		of a theorem by K.~J\"orgens,
		\emph{Michigan Math. J.}, 5 (1958), 105--126.
		
		\bibitem{ChangYuan2010}
		S.-Y.~A.~Chang and Y.~Yuan,
		A Liouville problem for the sigma-2 equation,
		\emph{Discrete Contin. Dyn. Syst.}, 28 (2010), 659--664.
		
		\bibitem{ChenXiang2019}
		L.~Chen and N.~Xiang,
		Rigidity theorems for the entire solutions of 2-Hessian equation,
		\emph{J. Differential Equations}, 267 (2019), 5202--5219.
		
		\bibitem{ChengYau1986}
		S.-Y.~Cheng and S.-T.~Yau,
		Complete affine hypersurfaces. Part I.
		The completeness of affine metrics,
		\emph{Comm. Pure Appl. Math.}, 39 (1986), 839--866.
		
		\bibitem{ChuDinew2023}
		J.~Chu and S.~Dinew,
		Liouville theorem for a class of Hessian equations,
		\emph{arXiv preprint}, arXiv:2306.13825 (2023).
		
		\bibitem{Du2022}
		S.-Z.~Du,
		Necessary and sufficient conditions to Bernstein theorem
		of a Hessian equation,
		\emph{Trans. Amer. Math. Soc.}, 375 (2022), 4873--4892.
		
		\bibitem{Jorgens1954}
		K.~J\"orgens,
		\"Uber die L\"osungen der Differentialgleichung \(rt-s^2=1\),
		\emph{Math. Ann.}, 127 (1954), 130--134.
		
		\bibitem{LiRenWang2019}
		C.~Li, C.~Ren, and Z.~Wang,
		Curvature estimates for convex solutions of some
		fully nonlinear Hessian-type equations,
		\emph{Calc. Var. Partial Differential Equations},
		58 (2019), no.~6, Article 188.
		
		\bibitem{LiRenWang2016}
		M.~Li, C.~Ren, and Z.~Wang,
		An interior estimate for convex solutions and a rigidity theorem,
		\emph{J. Funct. Anal.}, 270 (2016), 2691--2714.
		
		\bibitem{LiWu2026}
		Z.~Li and K.~Wu,
		Interior Hessian estimates for the quadratic Hessian equation,
		\emph{arXiv preprint}, arXiv:2608.23233v2 (2026).
		
		\bibitem{LiuRen2023}
		Y.~Liu and C.~Ren,
		Pogorelov type \(C^2\) estimates for sum Hessian equations
		and a rigidity theorem,
		\emph{J. Funct. Anal.}, 284 (2023), no.~1, 109726.
		
		\bibitem{LuSroka2026}
		S.~Lu and M.~Sroka,
		On Liouville's theorem for the Hessian quotient equation
		\(\sigma_2/\sigma_1\),
		\emph{arXiv preprint}, arXiv:2602.14946 (2026).
		
		\bibitem{Pogorelov1972}
		A.~V.~Pogorelov,
		On the improper convex affine hyperspheres,
		\emph{Geom. Dedicata}, 1 (1972), 33--46.
		
		\bibitem{ShankarYuan2022}
		R.~Shankar and Y.~Yuan,
		Rigidity for general semiconvex entire solutions
		to the sigma-2 equation,
		\emph{Duke Math. J.}, 171 (2022), no.~15, 3201--3214.
		
		\bibitem{ShankarYuan2025}
		R.~Shankar and Y.~Yuan,
		Hessian estimates for the sigma-2 equation in dimension four,
		\emph{Ann. of Math.}, 201 (2025), no.~2, 489--513.
		
		\bibitem{Tu2024}
		Q.~Tu,
		Pogorelov type estimates for \((n-1)\)-Hessian equations
		and related rigidity theorems,
		\emph{arXiv preprint}, arXiv:2405.02939 (2024).
		
		\bibitem{BinWang2026}
		B.~Wang,
		A Liouville type rigidity result for entire solutions
		of the \(k\)-Hessian equation,
		\emph{arXiv preprint}, arXiv:2608.29392v2 (2026).
		
		\bibitem{WangBao2022}
		C.~Wang and J.~Bao,
		Liouville property and existence of entire solutions
		of Hessian equations,
		\emph{Nonlinear Anal.}, 223 (2022), 113020.
		
		\bibitem{Warren2016}
		M.~Warren,
		Nonpolynomial entire solutions to \(\sigma_k\) equations,
		\emph{Comm. Partial Differential Equations}, 41 (2016), 848--853.
		
		\bibitem{WarrenYuan2009}
		M.~Warren and Y.~Yuan,
		Hessian estimates for the \(\sigma_2\) equation in dimension 3,
		\emph{Comm. Pure Appl. Math.}, 62 (2009), 305--321.
		
		\bibitem{Yuan2002}
		Y.~Yuan,
		A Bernstein problem for special Lagrangian equations,
		\emph{Invent. Math.}, 150 (2002), 117--125.
		
		\bibitem{Zhang2025}
		R.~Zhang,
		\(C^2\) estimates for \(k\)-Hessian equations and a rigidity theorem,
		\emph{Adv. Math.}, 480 (2025), Article 110488.
		
		\bibitem{ZhouZhu2026}
		X.~Zhou and R.~Zhu,
		Rigidity for convex entire solutions to the \(k\)-Hessian equation,
		\emph{arXiv preprint}, arXiv:2609.20445v1 (2026).
		
	\end{thebibliography}
\end{document}